\def\BibTeX{{\rm B\kern-.05em{\sc i\kern-.025em b}\kern-.08emT\kern-.1667em\lower.7ex\hbox{E}\kern-.125emX}}
\begin{document}

%
% The ''title'' command has an optional parameter, allowing the author to define a ''short title'' to be used in page headers.
\title[Efficient solvers for Armijo's back-tracking problem]{Efficient solvers for Armijo's back-tracking problem}

%
% The ''author'' command and its associated commands are used to define the authors and their affiliations.
% Of note is the shared affiliation of the first two authors, and the ''authornote'' and ''authornotemark'' commands
% used to denote shared contribution to the research.
\author{I. F. D. Oliveira}
%\authornote{Both authors contributed equally to this research.}
\email{ivodavid@gmail.com}
\affiliation{%
  \institution{Institute of Science, Engineering and Technology, Federal University of the Valleys of Jequitinhonha and Mucuri}
  \streetaddress{R. Cruzeiro, 1}
  \city{Te\'ofilo Otoni}
  \state{Minas Gerais}
  \country{Brazil}
  \postcode{39803-371}
}
%\orcid{1234-5678-9012}
\author{R. H. C. Takahashi}
%\authornotemark[1]
\email{taka@mat.ufmg.br}
\affiliation{%
  \institution{Department of Mathematics, Federal University of Minas Gerais}
  \streetaddress{Av. Pres. Ant\^onio Carlos, 6627}
  \city{Belo Horizonte}
  \state{Minas Gerais}
  \country{Brazil}
  \postcode{31270-901}
}

%
% By default, the full list of authors will be used in the page headers. Often, this list is too long, and will overlap
% other information printed in the page headers. This command allows the author to define a more concise list
% of authors' names for this purpose.
\renewcommand{\shortauthors}{Oliveira and Takahashi}

%
% The abstract is a short summary of the work to be presented in the article.
\begin{abstract}
Backtracking is an inexact line search procedure that selects the first value in a sequence $x_0, x_0\beta, x_0\beta^2...$ that satisfies $g(x)\leq 0$ on $\mathbb{R}_+$ with $g(x)\leq 0$ iff $x\leq x^*$. This procedure is widely used in descent direction optimization algorithms with Armijo-type conditions. It both returns an estimate in $(\beta x^*,x^*]$ and enjoys an upper-bound $\lceil \log_{\beta} \epsilon/x_0 \rceil$ on the number of function evaluations to terminate, with $\epsilon$ a lower bound on $x^*$. The basic bracketing mechanism employed in several root-searching methods is adapted here for the purpose of performing inexact line searches, leading to a new class of inexact line search procedures. The traditional bisection algorithm for root-searching is transposed into a very simple method that completes the same inexact line search in at most $\lceil \log_2 \log_{\beta} \epsilon/x_0 \rceil$ function evaluations. A recent bracketing algorithm for root-searching which presents both minmax function evaluation cost (as the bisection algorithm) and superlinear convergence is also transposed, asymptotically requiring $\sim \log \log \log \epsilon/x_0 $ function evaluations for sufficiently smooth functions. Other bracketing algorithms for root-searching can be adapted in the same way. Numerical experiments suggest time savings of 50\% to 80\% in each call to the inexact search procedure. 
\end{abstract}

%
% The code below is generated by the tool at http://dl.acm.org/ccs.cfm.
% Please copy and paste the code instead of the example below.
%
\begin{CCSXML}
<ccs2012>
<concept>
<concept_id>10002950.10003705.10003707</concept_id>
<concept_desc>Mathematics of computing~Solvers</concept_desc>
<concept_significance>500</concept_significance>
</concept>
<concept>
<concept_id>10002950.10003714.10003715</concept_id>
<concept_desc>Mathematics of computing~Numerical analysis</concept_desc>
<concept_significance>300</concept_significance>
</concept>
</ccs2012>
\end{CCSXML}

\ccsdesc[500]{Mathematics of computing~Solvers}
\ccsdesc[300]{Mathematics of computing~Numerical analysis}

%
% Keywords. The author(s) should pick words that accurately describe the work being
% presented. Separate the keywords with commas.
\keywords{inexact line search, Armijo-type methods, backtracking, bracketing algorithms, geometric bisection}

\maketitle

\section{Introduction}
Backtracking is an inexact line search technique typically used in the context of descent direction algorithms for solving non-linear optimization problems \citep{luenberger, boyd, gill}. After a descent direction is computed, a step size must be chosen by solving an inexact line searching problem that can be written as
\begin{equation}\label{eq:problem}
    \text{Find } \hat{x} \in \mathbb{R}_+ \text{ such that } g(\hat{x}) \leq 0;
\end{equation}
for some $g:\mathbb{R}_+ \to \mathbb{R}$ such that $g(x)\leq 0$ for all $x$ less than or equal to an unknown turning point $x^*\in \mathbb{R}_+$ and $g(x)>0$ otherwise. The condition $g(x)\leq 0$ expresses some acceptable criteria for a descent method to attain desired convergence properties, such as the well known Armijo's condition \citep{armijo}, Wolfe's condition \citep{wolfe}, amongst others \citep{burachik,shi,boukis,calatroni,truong,vaswani}. The backtracking procedure, initiated with some pre-specified values of $x_0\geq x^*$ and $\beta \in (0,1)$, sequentially verifies and returns $\hat{x}$ as the first value of the sequence $x_0,\ x_0\beta,\ x_0\beta^2, ...$ that satisfies the inequality in (\ref{eq:problem}), i.e. it usually takes no more than three lines (within a larger routine) as described in Algorithm \ref{alg:back_track}.\\ \\
\begin{algorithm}[H]
%\DontPrintSemicolon
  $\tilde{x} \leftarrow x_0$\;
  \While{$g(\tilde{x})>0$}
  {
        $\tilde{x} \leftarrow \beta\tilde{x}$\;
  } 
\caption{Backtracking\label{alg:back_track}}
\end{algorithm}	\emph{ }\\[-3mm]

Notwithstanding the relevance of Algorithm \ref{alg:back_track} as a component of a large variety of nonlinear optimization algorithms, the literature has not focused on its study yet. The working principles of the traditional backtracking algorithm are examined here, and a new class of methods for inexact line search with enhanced performance is proposed. 

It is shown that the traditional backtracking delivers a $\lceil \log_{\beta} \epsilon/x_0 \rceil $ upper-bound on the number of function evaluations to terminate, where $\epsilon$ is a lower bound on $x^*$. The simplest method belonging to the class proposed here, which is based on the traditional bisection algorithm for root-searching, completes the same task with at most $\lceil \log_2 \log_{\beta} \epsilon/x_0 \rceil$ function evaluations. The same upper bound is provided by another method based on a recent bracketing algorithm for root-searching \cite{oliveira1}, which requires asymptotically only $\sim \log \log \log \epsilon/x_0 $ function evaluations in the case of sufficiently smooth functions. Other root-searching bracketing algorithms can be adapted similarly for performing inexact line searches efficiently.

Numerical experiments are provided, suggesting 50\% to 80\% of function evaluation savings in each call to the inexact search procedure.

% =====================================================================
\section{Analysis of Traditional Backtracking}

The procedure described in Algorithm \ref{alg:back_track} enjoys the following guarantees:
\begin{theorem}\label{the:back_track}
Assume that $x^*>\epsilon>0$, $\beta \in (0,1)$, $x_0 > x^*$, and $g(x) > 0$ iff $x > x^*$. Then, the backtracking algorithm finds a solution $\hat{x}$ such that $g(\hat{x})\leq 0$ in at most $\lceil\log_\beta \epsilon/x_0\rceil$ iterations, and the solution $\hat{x}$ satisfies $\beta x^* < \hat{x} \leq x^*$.
\end{theorem}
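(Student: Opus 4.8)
The plan is to track the iterates of the while loop explicitly and exploit the monotonicity of the geometric sequence together with the \emph{iff} characterization of the sign of $g$. After $k$ executions of the loop body the variable holds the value $x_0\beta^k$, so the algorithm tests $g$ on the strictly decreasing sequence $x_0,\,x_0\beta,\,x_0\beta^2,\dots$ By hypothesis $g(x)\le 0$ exactly when $x\le x^*$, so the loop halts at the first index $K$ for which $x_0\beta^K\le x^*$; since $\beta^k\to 0$ and $x^*>0$ such an index exists, which already guarantees termination.

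First I would pin down $K$. Because $x_0>x^*$ we have $g(x_0)>0$, so the body runs at least once and $K\ge 1$. Minimality of $K$ forces the previous iterate to be infeasible, i.e.\ $x_0\beta^{K-1}>x^*$. Multiplying this inequality by $\beta$ and recalling that the returned value is $\hat{x}=x_0\beta^K$ yields $\hat{x}>\beta x^*$, while the stopping condition gives $\hat{x}\le x^*$; together these establish the bracketing claim $\beta x^*<\hat{x}\le x^*$ immediately.

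For the iteration count I would reuse the infeasibility of the penultimate iterate. Chaining $x_0\beta^{K-1}>x^*>\epsilon$ gives $\beta^{K-1}>\epsilon/x_0$, and applying $\log_\beta$ to both sides converts this into a bound on $K$. The step that most warrants care is this last one: since $\beta\in(0,1)$ the map $\log_\beta$ is \emph{decreasing}, so it reverses the inequality, producing $K-1<\log_\beta(\epsilon/x_0)$.

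The remaining obstacle is then purely the ceiling bookkeeping: from $K<\log_\beta(\epsilon/x_0)+1$ with $K$ an integer one must deduce $K\le\lceil\log_\beta(\epsilon/x_0)\rceil$, which I would verify by splitting into the cases where $\log_\beta(\epsilon/x_0)$ is or is not an integer. No deeper difficulty is expected; the only genuine pitfalls are the sign reversal under the decreasing logarithm and keeping the count of loop iterations, $K$, distinct from the number of function evaluations, which is $K+1$.
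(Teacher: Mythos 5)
Your proof is correct: termination from $\beta^k \to 0$, the bracketing $\beta x^* < \hat{x} \le x^*$ via minimality of $K$ (using $x_0 > x^*$ to get $K \ge 1$), and the ceiling bookkeeping with the sign reversal under the decreasing map $\log_\beta$ all check out. The paper states Theorem \ref{the:back_track} without any proof, so there is no official argument to compare against; yours is the standard one and agrees with the paper's footnote giving the exact count $n=\lceil\log_\beta x^*/x_0\rceil$, and your closing distinction between loop iterations $K$ and function evaluations $K+1$ is worth keeping, since the abstract phrases the same bound in terms of function evaluations.
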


Theorem \ref{the:back_track} is often an unstated and implicit motivation to employ backtracking, since it both guarantees a finite termination in $\lceil\log_\beta \epsilon/x_0\rceil$ iterations\footnote{The exact number of iterations can be more precisely expressed as a function of $x^*$ with the relation $n =\lceil\log_\beta x^*/x_0\rceil$. The solution-independent bound requires $x^*$ to be bounded away from zero, since otherwise, backtracking may require arbitrarily many iterations the closer $x^*$ is to zero.} and gives a guarantee on the location of $\hat{x}$. The more the value of $\beta$ approximates $1.0$ the closer $\hat{x}$ is guaranteed to be to $x^*$, which is the maximum possible step-size within the guarantees associated with $g(x)\leq 0$. The property that $\beta x^*< \hat{x}\leq x^*$ is often key in ensuring that the parent algorithm ``makes the most out of'' each descent direction expensively computed throughout its iterations. Of course, arbitrarily fast procedures could easily be devised that find $g(x)\leq 0$ by taking faster converging sequences to $0$ if this requirement were to be dropped. Hence implicit to applications that make use of backtracking is the requirement that the solution to problem (\ref{eq:problem}) must be ``not too far from $x^*$''.

Of a similar nature to the requirement that $\hat{x}$ is ``not too far from $x^*$'' is the requirement that $x^*$ is ``not too close to zero''. Without this, the algorithm could take arbitrarily long to find $\hat{x}$ the closer $x^*$ is to zero. This second requirement is, again, implicit in the formulation of backtracking procedures and, at times, it is even entailed by the construction of the parent algorithm. For example, assume the stopping criteria of the parent algorithm verifies stagnation in the domain of the objective function $f(\cdot)$. Then, by construction, when the parent algorithm  finds one instance of (\ref{eq:problem}) such that  ``$x^*$ is too close to zero'', it terminates. Thus, with the exception of the very last iteration, every other iteration will satisfy $x^*\geq \epsilon$. 

In practice, any backtracking procedure should include a stopping condition that is activated when the iteration count $i$ becomes greater than an allowed maximum $i_{max}$, in order to guarantee its termination. This is equivalent to the condition $x_0 \beta^i < \epsilon$ for $\epsilon = x_0 \beta^{i_{max}}$. Hence, the assumption that $x^*\geq \epsilon$ for some pre-specified $\epsilon$ seems to be a hypothesis on (\ref{eq:problem}) that applications that make use of backtracking must assume, either explicitly or implicitly. 

Both of these requirements, extracted from Theorem \ref{the:back_track} and found implicitly or explicitly in the literature, are now stated formally for the sake of clarity. We require that: \\[-3mm]
\\
\textit{\textbf{Condition 1.} For some pre-specified $\beta$ in $(0,1)$, the solution $\hat{x}$ to problem (\ref{eq:problem}) must satisfy $\beta x^*<\hat{x}$.}\\
\textit{\textbf{Condition 2.} For some pre-specified $\epsilon >0$, the turning point $x^*$ of problem (\ref{eq:problem}) satisfies $\epsilon<x^*$.}\\

% ==================================================================
\section{Bracketing-based inexact line search}

The following general algorithm is proposed here:\\[-2mm]

\begin{algorithm}[H]
%\DontPrintSemicolon
  $a \leftarrow \epsilon$\; 
	$b \leftarrow x_0$\;
  \While{$a\leq \beta b$}
  {
        chose $\tilde{x}$ in $(a,b)$ and evaluate $g(\tilde{x})$\; 
        update $(a,b)$ according to (\ref{eq:update})\; 
  } 
  return $\hat{x} = a$;  
\caption{Fast-tracking\label{alg:fast_back_track}}
\end{algorithm}
\emph{ }\\
The update rule in line 5 is defined by: 
\begin{equation}
\left\{
\begin{array}{l}
a \leftarrow \tilde{x} \mbox{ if } g(\tilde{x})<0 \\[1mm]
b \leftarrow \tilde{x} \mbox{ if } g(\tilde{x})>0 \\[1mm]
a \leftarrow \tilde{x} \mbox{ and } b \leftarrow \tilde{x} \mbox{ if } g(\tilde{x}) = 0
\end{array}
\right.
\label{eq:update}
\end{equation}
Algorithm \ref{alg:fast_back_track} defines a class of bracketing-based methods for inexact line search because both the turning point $x^*$ and the final solution $\hat{x}$ are kept inside the interval $[a,b]$ throughout the iterations. Different instances of this algorithm are defined by different choices of $\tilde{x}$ in line 4.

% ---------------------------------------------------------------
\subsection{Geometric bisection fast tracking}
\noindent
Consider the instance of Algorithm \ref{alg:fast_back_track} with the choice of $\tilde{x}$ in line 4 performed according to the choice rule (\ref{eq:choice}):
\begin{equation}
\tilde{x} \equiv \sqrt{ab} 
\label{eq:choice}
\end{equation}
This procedure enjoys the following guarantees:

\begin{theorem}\label{the:fast_back_tracking_worst}
Fast-tracking with $\tilde{x}$ given by (\ref{eq:choice}) finds a solution $\hat{x}$ such that $g(\hat{x})\leq 0$ in at most $ \lceil \log_2 \log_{\beta} \epsilon/x_0 \rceil$ iterations and the solution $\hat{x}$ satisfies $\beta x^* < \hat{x}\leq x^*$.
\end{theorem}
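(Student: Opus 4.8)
The plan is to prove three things in turn: a bracketing invariant, the feasibility/location guarantees, and the iteration bound; the substance lies in the last item, where the key observation is that choice rule (\ref{eq:choice}) turns Algorithm \ref{alg:fast_back_track} into ordinary bisection in logarithmic coordinates.

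First I would establish by induction that the turning point stays bracketed, i.e. $a \le x^* \le b$ at the start of every iteration. The base case holds since $a=\epsilon<x^*$ and $b=x_0>x^*$ by hypothesis. For the inductive step, $\tilde{x}=\sqrt{ab}$ lies strictly in $(a,b)$, and the sign convention $g(x)>0\iff x>x^*$ gives three cases matching (\ref{eq:update}): if $g(\tilde{x})>0$ then $\tilde{x}>x^*$ and $b\leftarrow\tilde{x}$ preserves $x^*<b$; if $g(\tilde{x})<0$ then $\tilde{x}\le x^*$ and $a\leftarrow\tilde{x}$ preserves $a\le x^*$; and $g(\tilde{x})=0$ collapses the bracket and terminates. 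Both guarantees then follow at once. On exit $\hat{x}=a\le x^*$, so $g(\hat{x})\le 0$, which is feasibility; and the loop has stopped precisely because the condition $a\le\beta b$ failed, so $a>\beta b$, whence $\hat{x}=a>\beta b\ge\beta x^*$ using $x^*\le b$. Together these yield $\beta x^*<\hat{x}\le x^*$.

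For the iteration count I would pass to logarithmic coordinates. Writing $w\equiv\log(b/a)$ for the current bracket, the choice $\tilde{x}=\sqrt{ab}$ is exactly the arithmetic midpoint of $\log a$ and $\log b$, so whichever update is applied replaces $b/a$ by $\sqrt{b/a}$ and therefore halves $w$; after $n$ iterations $w_n=w_0/2^n$ with $w_0=\log(x_0/\epsilon)$. The stopping test $a>\beta b$ is equivalent to $w<\log(1/\beta)$, so the loop runs until $w_0/2^n<\log(1/\beta)$, i.e. until $2^n$ exceeds $\log(x_0/\epsilon)/\log(1/\beta)$. This ratio is independent of the logarithm's base and equals $\log_\beta(\epsilon/x_0)$, so the iteration count is the least such $n$, namely $\lceil\log_2\log_\beta(\epsilon/x_0)\rceil$. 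A pleasant feature worth noting is that the ratio is square-rooted regardless of which endpoint moves, so this count does not depend on the location of $x^*$, in contrast to the $x^*$-dependent count of Theorem \ref{the:back_track}.

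The conceptual heart of the argument, and the only genuinely non-routine step, is recognizing that the geometric choice rule linearizes into standard bisection under $x\mapsto\log x$, collapsing the $\log_\beta(\epsilon/x_0)$ backtracking steps of Theorem \ref{the:back_track} into their logarithm. The remaining care is bookkeeping: treating the degenerate case $g(\tilde{x})=0$, which only terminates earlier and is benign under the natural reading that $g$ vanishes solely at $x^*$; and pinning down the ceiling, where the strict-versus-nonstrict form of the stopping inequality is what governs the borderline case in which $\log_\beta(\epsilon/x_0)$ happens to be an exact power of two.
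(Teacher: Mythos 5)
Your proof is correct and takes essentially the same route as the paper's: the key step in both is recognizing that the geometric-mean rule $\tilde{x}=\sqrt{ab}$ is exactly bisection under $x\mapsto\log x$, with the location guarantee read off from the exit condition $a>\beta b$ together with $x^*\le b$, and the iteration count obtained by halving $\log(b/a)$ from $\log(x_0/\epsilon)$ down to the threshold $\log(1/\beta)=$ base-independent ratio $\log_\beta(\epsilon/x_0)$. Your explicit bracketing induction and your flagging of the $g(\tilde{x})=0$ degenerate case and the strict-versus-nonstrict borderline (which the paper's proof glosses over) are, if anything, slightly more careful than the original.
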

\begin{proof}
The proof follows from the fact that the inequalities $\beta x^* < \hat{x}\leq x^*$ are equivalent to $\log_2 \beta < \log_2 \hat{x} - \log_2x^* \leq 0$, which in turn implies that $|\log_2 \beta |> |\log_2 \hat{x} - \log_2x^*|$. Therefore, to produce an estimate $\hat{x}$ to $x^*$ with \emph{relative} precision of at least $\beta$,  is equivalent to searching for an estimate $\hat{X} = \log_2 \hat{x}$ of $X^* = \log_2 x^*$ with an \emph{absolute} error of at most $-\log_2 \beta$. Under this logarithmic scale, the bisection method is guaranteed to perform the search task with minmax optimality guarantees. What remains is, quite simply, to translate the bisection method from the logarithmic to the standard scale. This is done as follows: Define $A = \log_2 a$ and $B = \log_2b$; thus, if the bisection method takes the midpoint $X_{1/2}=(A+B)/2$ in each iteration on the logarithmic scale, then, in the standard scale  this translates to  $X_{1/2} = (\log_2 a+\log_2 b)/2 = (\log_2 ab)/2 = \log_2 (ab)^{1/2}$. Thus, we have that $\tilde{x}$ must be taken to be equal to $\sqrt{ab}$ in the standard scale.

We now verify that when $B-A\leq -\log_2 \beta$, the lower estimate produced by $A = \log_2 a$ satisfies Condition 1, i.e. that for any value of $x^*$ in $(a,b)$ we must have that $\beta x^*<a$. For this, notice that $B-A\leq -\log_2 \beta \implies \log_2b/a \leq \log_2 \beta^{-1} \implies b/a\leq \beta^{-1}$ which in turn implies that $\beta b\leq a$.  And, since $x^*$ is less than $b$ the inequality in Condition 1 holds. In fact, we express the condition $B-A\leq -\log_2 \beta$ as $a\leq \beta b$ in the standard scale. What is left now is to verify the number of iterations required by the bisection method over the logarithmic scale.

The bisection method requires at most $n_{1/2}\equiv \lceil\log_2 (B_0-A_0)/\delta \rceil$ iterations to reduce the interval $(A,B)$ to one of length $B-A\leq \delta$. Thus, given that $A_0 = \log_2\epsilon$ and that $B_0 = \log_2x_0$  and $\delta = -\log_2 \beta$ we find that $n_{1/2}$ is equal to $\lceil \left(\log_2 (B_0-A_0)/\log_2 \beta\right) \rceil = \lceil\log_2\left(( \log_2 x_0-\log_2\epsilon)/\log_2 \beta\right) \rceil = \lceil\log_2 \log_{\beta} x_0/\epsilon \rceil$.
\end{proof}
Thus, an immediate consequence of Theorem \ref{the:fast_back_tracking_worst} is that naive backtracking procedures unnecessarily fall short in terms of worst case performance. They require exponentially more iterations on the worst case when compared to simple binary searching applied to the logarithmic scale. Of course, the $\lceil \log_\beta \epsilon/x_0 \rceil$ upper-bound of standard backtracking can, and often is, carefully minimized by choosing $x_0$ as ``near as possible'' to $x^*$ by means of interpolation bounds. However, the same procedures that minimize $\lceil \log_\beta \epsilon/x_0 \rceil$ can also be used to minimize the tighter $\lceil \log \log_\beta \epsilon/x_0 \rceil$ upper-bound of {\em geometric bisection fast-tracking}. Notice that backtracking for an estimate with relative precision $\beta$ is equivalent to grid searching with a fixed step size on the logarithmic scale: the relative inefficiencies of grid searching when compared to binary searching are well documented in the literature \citep{press}. Thus, this improvement is attained with no appeal to additional assumptions on the conditions of Problem (\ref{eq:problem}), nor on the use of additional function or derivative evaluations. It is attained solely at the cost of computing the method itself, which for choice rule (\ref{eq:choice}) is the additional computation of one square-root per iteration.

The application of the bisection method on the logarithmic scale seems to be an often forgotten technique within the different communities that make use of numerical solvers, and it is certainly under-represented in the literature. We surveyed popular numerical analysis and optimization textbooks, including \citet{press,chapra,boyd,luenberger,gill}, and found no reference to this technique, despite the existence of scattered references in computational forums \footnote{Some early external references to ``geometric bisection'' can be found in \url{codeforces.com/blog/entry/49189}, \url{math.stackexchange.com/questions/3877202/bisection-method-with-geometric-mean} and \url{github.com/SimpleArt/solver/wiki/Binary-Search}} and other isolated references to ``geometric bisection'' in the context of eigenvalue computation \cite{ralha1,ralha2}. In fact, it is easy to find textbook examples that recommend the use of relative error stopping criteria in conjunction with bisection method on a linear scale (see pseudo-code in Figure 5.11 of \citep{chapra} and chapter 9.1 of \citep{press}).  This gives rise to the same inefficiency as the one caused by the use of naive backtracking. Similar remarks can be made concerning the use of golden-section searching / Fibonacci-searching for a minimum using relative error stopping criteria. Of course, the underlying metric behind floating point arithmetic most certainly prioritizes relative over absolute error in numerical representations \citep{press,chapra}, hence it is natural to recommend upper-bounding relative errors and, for the same reasons, the proper logarithmic scaling should be recommended before the use of bisection type methods, specifically when the initial interval $(a,b)$ can span several orders of magnitude. 

A noticeable exception to the ``inefficiency gap'' between  the use of arithmetic and geometric averages in the bisection method, is when the search is already initiated  with a small interval $(a,b)$ with\footnote{This way, if the standard bisection method runs till $\beta b \leq a$ for some $\beta$ near one, it will take a number of iterations $n_{1/2} $ of the order of $\sim \log_2 (b_0-a_0)/[a_0(1-\beta)] =  \log_2 (b_0-a_0)/a_0 - \log_2 (1-\beta)$, and since $\Delta / x \approx \log_2 (x+\Delta ) - \log x$, we find that $n_{1/2}$ is of the order of $\approx \log_2 (\log_2 b_0 -\log_2 a_0) + \log_2(1-\beta)/\beta = \log_2 \log_2 b_0/ a_0 + \log_2\log_2 \beta$
which simplifies to $\approx  \log_2 \log_\beta b_0/ a_0$, the complexity of the bisection method applied to the logarithmic scale.} $a,b>0$ and with $b-a\ll a$, and thus a value of $\beta$ close to $1$. However, standard conditions under which backtracking  is used can hardly be expected to satisfy this condition since the further into the run of a descent direction algorithm, the closer $x^*$ is expected to be to zero, and hence, quite the opposite is expected. That is, we find that throughout the run of a standard descent direction algorithm $b-a = x_0-\epsilon$ tends to be much greater than $a = \epsilon$. Furthermore, the choice of $\beta$ near one defeats the purpose of employing inexact searching, since it is often intended as a reduction to the computational cost of exact searching. Instead of choosing $\beta$ near one, in this case one might as well employ exact one dimensional minimization techniques to dictate the step-size.

% -------------------------------------------------------
\subsection{Fast tracking with multi-logarithmic speed-up}

Asymptotic bounds are also improved when the proper scale is adopted. This is shown in the following by making explicit the estimated number of iterations when a hybrid technique for the construction of $\tilde{x}$ is used. The exact construction of $\tilde{x}$ is a straightforward application of the ITP root-searching method, described in \citep{oliveira1}, on the logarithmic scale, and is omitted for brevity.

\begin{corollary}\label{cor:ITP}
If $\tilde{x}$ in line 4 of Algorithm \ref{alg:fast_back_track} is taken as the ITP estimate on the logarithmic scale (instead of the bisection method), then, the same guarantees as Theorem \ref{the:fast_back_tracking_worst} hold; and, if furthermore $g(x)$ is $C^1$ with $x^*$ a simple root, then asymptotically the number of iterations is of the order of $\sim \log \log \log_{\beta} \epsilon/x_0$. 
\end{corollary}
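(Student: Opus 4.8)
The plan is to prove Corollary \ref{cor:ITP} by leveraging the same logarithmic-scale transposition already established in the proof of Theorem \ref{the:fast_back_tracking_worst}, but now applying the ITP method instead of plain bisection. The key observation is that the entire correctness apparatus---that keeping $x^*$ bracketed in $[a,b]$ until $a \leq \beta b$ guarantees both $g(\hat{x}) \leq 0$ and $\beta x^* < \hat{x} \leq x^*$---depends only on the bracketing property and the stopping test, not on which interior point $\tilde{x}$ is selected. Since the ITP method is itself a bracketing method (it always returns a new point strictly inside the current interval and preserves the sign-change structure), the first assertion of the corollary follows immediately: correctness is inherited verbatim, and the worst-case bound $\lceil \log_2 \log_\beta \epsilon/x_0 \rceil$ is inherited because the ITP method is known to never exceed the bisection iteration count by more than its fixed $n_0$ budget term, which can be absorbed into the same minmax guarantee established in \citep{oliveira1}.

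First I would state precisely what ``ITP estimate on the logarithmic scale'' means: setting $A = \log_2 a$, $B = \log_2 b$, one runs one step of the ITP procedure on the interval $[A,B]$ for the transformed function $G(X) = g(2^X)$, obtaining a point $\tilde{X}$, and then sets $\tilde{x} = 2^{\tilde{X}}$. Because $G$ is monotone with the same sign pattern as $g$ and shares the turning point $X^* = \log_2 x^*$, the ITP bracketing guarantees transfer directly. I would then invoke the two-part convergence theorem for ITP from \citep{oliveira1}: it matches bisection in the worst case and, when the root is simple and the function is sufficiently smooth, it converges superlinearly, reducing the bracket on the logarithmic scale to the target width $\delta = -\log_2 \beta$ in far fewer steps than $\log_2$ of the initial width.

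The asymptotic count is then a matter of composing two logarithms. Recall from Theorem \ref{the:fast_back_tracking_worst} that the task reduces to shrinking an interval of initial width $W_0 = \log_2 x_0 - \log_2 \epsilon = \log_\beta \epsilon/x_0 \cdot \log_2\beta$ (up to the constant factor $-\log_2\beta$) down to width $\delta$ on the logarithmic scale. Bisection needs $\sim \log_2 W_0 \sim \log_2 \log_\beta \epsilon/x_0$ steps; the superlinear ITP convergence improves this to roughly the logarithm of the bisection count, i.e. $\sim \log \log W_0 \sim \log \log \log_\beta \epsilon/x_0$. The smoothness hypothesis ($g \in C^1$ with $x^*$ a simple root) is exactly what is needed so that $G = g \circ 2^{(\cdot)}$ is also $C^1$ with $X^*$ a simple root---the exponential change of variable is a diffeomorphism on $\mathbb{R}_+$ and preserves both the differentiability class and the simplicity of the root, since the chain rule gives $G'(X^*) = g'(x^*)\, x^* \ln 2 \neq 0$ whenever $g'(x^*) \neq 0$ and $x^* > 0$.

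The main obstacle I anticipate is \emph{not} the asymptotic counting---that is essentially a citation of the ITP convergence rate composed with the established change of scale---but rather verifying rigorously that the superlinear-convergence hypotheses of \citep{oliveira1} survive the logarithmic transformation, and that the interplay between the fixed-budget worst-case term of ITP and the superlinear regime does not spoil the asymptotic estimate. Concretely, I would need to confirm that the ITP tuning parameters (the interpolation and truncation radii) can be set on the logarithmic scale so that the method enters its superlinear phase before the stopping test $a \leq \beta b$ triggers; otherwise the $\sim \log\log\log$ rate would be masked by the worst-case term. Because \citep{oliveira1} establishes exactly such a superlinear rate for $C^1$ functions with simple roots and guarantees the worst-case bound is never breached, I expect this to go through, but it is the one place where the proof genuinely relies on the detailed properties of ITP rather than on the generic bracketing structure, which is why the paper reasonably defers the explicit construction and convergence proof to the reference.
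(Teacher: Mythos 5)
Your proposal is correct and takes essentially the same route as the paper: the paper's entire proof is a one-line appeal to the properties of the ITP method of \citep{oliveira1} applied on the logarithmic scale already set up in Theorem \ref{the:fast_back_tracking_worst}, which is precisely the argument you spell out. Your additional verifications---that the exponential change of variables is a diffeomorphism preserving the $C^1$/simple-root hypotheses via $G'(X^*) = g'(x^*)\,x^*\ln 2 \neq 0$, and that the ITP slack term $n_0$ must be managed so the worst-case budget does not mask the superlinear phase---merely make explicit details the paper delegates to the citation.
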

\begin{proof}
Follows immediately from the properties of the ITP method \citep{oliveira1}.
\end{proof}

Corollary \ref{cor:ITP} makes use of standard assumptions on the smoothness of $g$, under which even faster convergence can be guaranteed. The ITP method mentioned therein is an efficient first order root-searching method  that in the likes of Ridders' rule, Brent's method or Dekker's method, attains a superlinear order of convergence when employed to solve one dimensional root searching problems. However, unlike the aforementioned methods it is the only one known to retain the minmax optimal performance of the bisection method. The exact inner-workings of the ITP method are beyond the scope of this paper. A reader more familiar with other hybrid methods (such as Ridders', Brent's or Dekker's method) may substitute the ITP method for the solver of preference, albeit with weaker worst case guarantees. The point being that \emph{multi-logarithmic speed-ups can be attained} with interpolation based strategies while retaining the logarithmic speed-up on the worst case performance.

% =================================================================
\section{Experiments}

Quick numerical comparisons between standard backtracking and fast-tracking are performed here under the optimization set-up in which inexact searching is typically employed. For this we implement a standard gradient descent algorithm with Armijo's condition, from which the corresponding function $g(x)$ is derived, to minimize ten different loss functions $f: \mathbb{R}^{10} \mapsto \mathbb{R}$ described in Table \ref{tab:examples}. Both methods were initiated at $\boldsymbol{x} = [1, 1, ... 1]^T$ with $\beta = 0.8$, $\epsilon = 10^{-10},x_0 = 1$ and were compared after twenty gradient descent iterations. All functions chosen contain at least one local minimum not too far from the initial guess, and thus both implementations produced approximately the same path, hence ensuring the comparison is made on as-similar-as-possible conditions. We report here the results using a fixed upper-bound step value for $x_0$ that does not depend on the size of the gradient, i.e. our standard backtracking sequentially searches for the first term in the sequence  $\{ \boldsymbol{x}+\beta^k \nabla f(\boldsymbol{x})/ \|\nabla f(\boldsymbol{x})\| \ \text{ for } k=1,2,...\}$, and fast-tracking calls an external root-searching solver on the logarithmic scale. We use the ITP method\footnote{The ITP parameters used were of $\kappa_1 = 0.1; \kappa_2 = 2$; and, a slack parameter of $N_0 = 0.99$ applied on the rescaled root-searching problem made to satisfy $b-a\leq 1$ in order to benefit of the guarantees of \citep{oliveira1}.}, however other non-linear solvers could have been used with slightly weaker guarantees.  Under the conditions here considered the simple ``geometric average'' bisection method would require exactly 7 function evaluations in each iteration if exact arithmetic were used, hence we use this number as a reference point to which standard backtracking and fast-tracking are compared. 

Figure \ref{fig:evolution} focuses on the first function considered, and displays the evolution of the number of iterations required  by each inexact searching procedure as a function of the gradient-descent iteration.   And, as can be seen, fast-tracking tends to reduce the number of iterations the further into the run while backtracking increases the number of iterations the further into the run. This is because interpolation guarantees of the ITP method are improved with the progression of the gradient run (since it is initialized closer to the final solution), while standard backtracking will require more iterations as the ratio of $x^*/x_0$ is reduced the further into the run. In fact, we observe this pattern of progression of both backtracking and of fast-tracking in most runs.

\begin{figure}
 \centering
 \includegraphics[width=12cm]{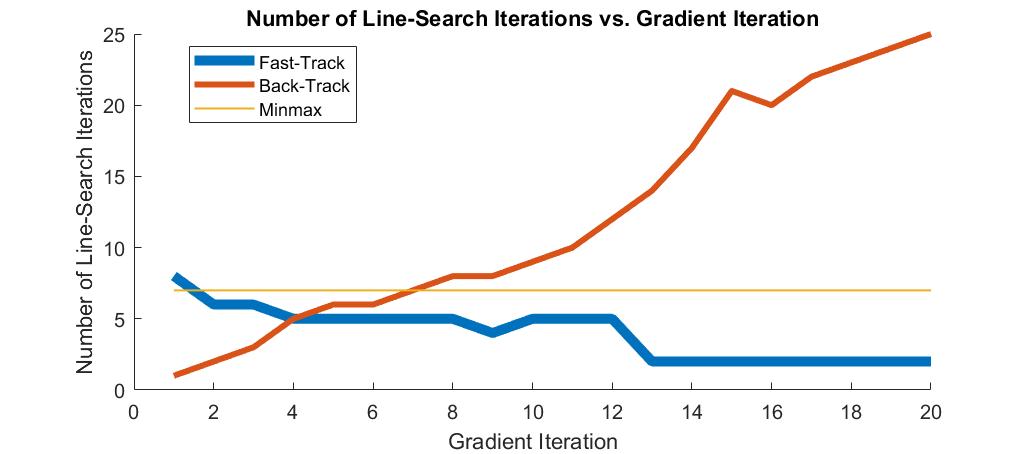} 
 \caption{Evolution of number of iterations after each gradient calclation
\label{fig:evolution}}
\end{figure}

\begin{table}[ht]
\caption{\label{tab:examples} Average number of function evaluations required to solve  the inexact line-search problem in each iteration of a vanilla gradient descent for different loss functions. The numbers reported are the average obtained after 20 gradient steps under conditions where the minmax ``geometric-average'' binary-searching procedure would require exactly 7 iterations. Bellow, the symbol $V$ stands for an identity matrix plus the Vandermonde matrix  obtained in interpolation problems on $n$ Chebyshev points; the vector $\boldsymbol{n}$ is defined as $[1, 2, ..., n]^T$, and every operation on $\boldsymbol{n}$ is done element-wise. }
\begin{tabular}{lc|cc|} & & \rotatebox[origin=c]{-90}{Backtracking \hspace{7pt}} & \rotatebox[origin=c]{-90}{Fast-tracking \hspace{8pt}}     \\
\multicolumn{2}{l}{\textbf{Functions -}} \textbf{ }    &\hspace{-22.3pt}\raisebox{4pt}{|} \hspace{-4.55pt}\raisebox{-4pt}{|} \textbf{ }  & \textbf{ }  \\ 
Simple Quadratic &   $\sum_i x_i^2$ & 12.2 & 4.0 \\
High Degree Polynomial & $\sum_i x_i^{2i}$ & 10.8 & 4.8 \\
Vandermonde Interpolation & $\boldsymbol{x}^TV\boldsymbol{x}$ & 16.2 & 3.9 \\
Trigonometric 1 & $\sum_i i\cos(x_i)$  &  7.0 &  4.8 \\
Trigonometric 2 &  $\sum_i i\cos(\cos(x_i))$ & 12.0 & 4.0 \\
Log-Poly & $2\log ||\boldsymbol{x}-\boldsymbol{n}^{1/\boldsymbol{n}}||_2$ & 27.2 & 2.7 \\
Quartic & $\tfrac{1}{n}(\sum_i x_i)^4+|\sqrt{\boldsymbol{n}^Tx}|$ & 24.7 & 4.2 \\
Interpolation w/ Regularizer & $\boldsymbol{x}^TV\boldsymbol{x}+||\boldsymbol{x}-\sqrt{\boldsymbol{n}}||_1$ & 49.5 & 3.5 \\
Noisy Quadratic Hard & $||\boldsymbol{x}||^2_2+10^{-3}\sum_i \sin(i/x_i)$ & 26.8 & 3.0 \\
Noisy Quadratic Easy & $||\boldsymbol{x}||^2_2+10^{-3}\sum_i \sin(10^3ix_i)$  & 35.6 & 2.3 \\
& & \textbf{ }  & \textbf{ } \\
\textbf{Global Average} & & \textbf{22.2} & \textbf{3.7}   \\
\textbf{Global Worst Case} &  & \textbf{147} & \textbf{8}  \\  
\end{tabular}
\end{table}

As can be seen in Table \ref{tab:examples}, fast-tracking vastly improves on standard backtracking under both average and worst-case performance. The global average of fast-tracking is roughly 50\% that of the minmax guarantee of 7 iterations, and, since the ITP solver called made use of the $0.99$ slack variable, the worst case performance over the test set is at most $\lceil 0.99\rceil = 1$ iteration more than the minmax guarantee, i.e. at most $7+1 = 8$ iterations. Standard backtracking only attained a number of iterations equal to the minmax on one instance, and was outperformed by vanilla ``geometric average'' binary searching on every other instance.

Furthermore, by varying the values of $\beta$ and the initial estimate $\boldsymbol{x}$, we verify that the differences in performance are affected too. Our preliminary estimates suggest that for values of $\beta$ near $0.5$ backtracking performs much worse than fast-tracking than what is reported in Table \ref{tab:examples}, multiplying by a factor of $10$ the difference in average iteration count at it's peak value. For $\beta$ near $0$ or $1$ the differences are kept roughly in the range of the ones reported in Table \ref{tab:examples}. Concerning the effect of the initial estimate for $\boldsymbol{x}$, our experiments suggest that the the closer the initial estimate is to the stationary point $\boldsymbol{x}^*$ to which the gradient method converges, the greater the benefit of fast-tracking over backtracking, and when initiated far from $\boldsymbol{x}^*$ the difference in performance is reduced, but not reversed. Finally, analogous experiments were also performed providing the solvers with additional interpolation information and different values of $x_0$ and found no significant difference in the comparative performance reported above. Thus, these results have been kept out for brevity.

\section{Discussion}
The emphasis of ``backtracking papers'' does not typically lie on the three lines that construct and verify which point in the sequence $x_0, x_0\beta ...$ first satisfies $g(x)\leq 0$. In fact, the construction of $g(\cdot)$, and the guarantees associated with the criteria $g(\cdot)\leq 0$, is typically where the contributions of those papers are found. Thus, perhaps justifiably so, it seems that not much research effort has been devoted to those three lines since they, informally speaking, ``get the job done'' and ``some other paper can deal with it''. This is that paper.

Here, we show a simple and proper construction of a procedure that finds $g(x)\leq 0$, and does so with optimal guarantees. A logarithmic speed-up is attained with respect to worst case, and a multi-logarithmic speed-up is attained with respect to asymptotic performance if hybrid interpolation based techniques are employed. These speed-ups are well reflected in experiments achieving roughly 50\% to 80\% time savings in each call to the inexact line-searching procedure.

% -------------------------------------

\begin{acks}
This paper was written when the first author was a graduate student at the Federal University of Minas Gerais.
\end{acks}

%
% The next two lines define the bibliography style to be used, and the bibliography file.
\bibliographystyle{ACM-Reference-Format}
\bibliography{sample-base}

%
% If your work has an appendix, this is the place to put it.
%\appendix

%\section{Research Methods}

\end{document}